\newtheorem{theorem}{Theorem}[section]
\newtheorem{cor}[theorem]{Corollary}
\newtheorem{lemma}[theorem]{Lemma}
\newtheorem{rem}[theorem]{Remark}
\newcommand{\Fq}{\mathbb{F}_q}
\begin{document}

\begin{frontmatter}
\title{Large sets of Kirkman triple systems with order $q^n+2$}
\ead{wpolly0419@gmail.com}

\author[lzu]{C. Wang\corref{cor1}}
\author[lzu]{C. Shi}
\address[lzu]{School of Mathematics and statistics, Lanzhou University, Gansu 730000, P.R. China}
\cortext[cor1]{Corresponding Author}

\begin{abstract}
The existence of Large sets of Kirkman Triple Systems (LKTS) is an old problem in combinatorics. Known results are very limited, and a lot of them are based on the works of Denniston \cite{MR0349416, MR0369086, MR535159, MR539718}. The only known recursive constructions are an tripling construction by Denniston \cite{MR535159}and a product construction by Lei \cite{MR1931492}, both constructs an LKTS($uv$) on the basis of an LKTS($v$).

In this paper, we describe an construction of LKTS$(q^n+2)$ from LKTS$(q+2)$, where $q$ is a prime power of the form $6t+1$. We could construct previous unknown LKTS($v$) by this result, the smallest among them have $v=171,345,363$.
\end{abstract}
{\bf Keywords:} Large set, Steiner triple system, Kirkman triple system
\end{frontmatter}

\section{Introduction}
A \emph{Steiner triple system} of order $v$, denoted by STS($v$), is a pair $(X, \mathcal{B})$, where $X$ is a $v$-set and $\mathcal{B}$ is a collection of triples (called \emph{blocks}) of $X$ such that every pair of $X$ is contained in a unique block of $B$. Let $(X, \mathcal{B})$ be an STS($v$). If there exists a partition $A = \{P_1, P_2,\ldots,P_{(v-1)/2}\}$ of $\mathcal{B}$ such that each part $P_i$ forms a parallel class, i.e., a partition of $X$, then the STS($v$) is called \emph{resolvable} and $A$ is called a \emph{resolution}. A resolvable STS($v$) is usually called a Kirkman triple system of order $v$ (briefly KTS($v$)). It is well known that a KTS($v$) exists if and only if $v\equiv3\pmod{6}$.

Two STS($v$)s on the same set $X$ are said to be \emph{disjoint} if they have no triples in common. By a simple counting argument, there can be at most $v-2$ mutually disjoint STS($v$)s on the same set $X$;
such a set of $v-2$ disjoint systems must contain every possible triple in $X$, and is called a \emph{large set} of STS($v$)s and denoted by LSTS($v$). A large set of Kirkman triple systems of order $v$, denoted by LKTS($v$), is an LSTS($v$) where each STS($v$) is resolvable.

The existence problem of LKTS is posed by Sylvester in 1850 as an extension of Kirkman's schoolgirl problem. The existence of LSTS has been completely solved by Lu\cite{MR692824,MR692825,MR692826,MR757612} and Teirlinck\cite{MR1111565}, but existence of LKTS is much harder to establish, and only limited results are known. Kirkman \cite{LKTS9} showed the existence of an LKTS(9) in 1850, and Denniston \cite{MR0369086} constructed an LKTS(15) more than one century later. Denniston\cite{MR0349416,MR539718}, Chang and Ge\cite{MR1675197,MR2054972} and Zhou and Chang \cite{MR2593325} also gave further direct construction for LKTS. The first recursive construction of LKTS was a tripling construction by Denniston \cite{MR535159}, which was later generalized by Lei\cite{MR1931492} to a product construction. Zhang and Zhu \cite{MR1893029,MR1948399} improved the above mentioned tripling and product constructions, removing the need of transitive KTS. The known result for existence of LKTS are summarized below:
\begin{theorem}
\begin{enumerate}
\item There exists an LKTS($3^a5^br\prod^s_{i=1}(2\cdot13^{n_i}+1)\prod^t_{j=1}(2\cdot7^{m_j}+1)$) for integers $r\in\{1,7,11,13,17,35,43,67,91,123\}\cup\{2^{2p+1}5^{2q}+1|p,q\geq1\},a,n_i,m_j\geq 1,b,s,t\geq 0$ and $a+s+t\geq 2$ when $b\geq 1$ and $r\neq 1$.
\item There exists an LKTS($3\prod^s_{i=1}(2{q_i}^{n_i}+1)\prod^t_{j=1}(4^{m_j}-1)$), where $s+t\geq 1,n_i,m_j\geq 1, q_i\equiv7\pmod{12}$ and $q_i$ is a prime power.
\end{enumerate}
\end{theorem}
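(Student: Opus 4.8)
The plan is to realize both displayed families as the multiplicative closure generated by two recursions --- Denniston's tripling and Lei's product construction, each in the transitivity-free strengthening of Zhang and Zhu --- applied to a finite library of directly constructed base designs. I would organise the argument as: (i) record the two recursions in usable form; (ii) catalogue the base systems and auxiliary ingredients; (iii) check that every order of the stated shape is reachable.

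For (i), the tripling construction gives LKTS($v$)$\Rightarrow$LKTS($3v$), while the product construction, fed an LKTS($v$) and an auxiliary LR design (in Lei's terminology) associated with a base of order $m$, yields an LKTS($(2m+1)v$). The observation that organises everything is that, apart from the leading power of $3$, every factor occurring in the two families has the shape $2m+1$: here $5=2\cdot 2+1$; each element of $\{1,7,11,13,17,35,43,67,91,123\}$ is $2m+1$ for $m\in\{0,3,5,6,8,17,21,33,45,61\}$; one has $2^{2p+1}5^{2q}+1=2(2^{p}5^{q})^{2}+1$; and $2\cdot 13^{n}+1$, $2\cdot 7^{m}+1$, $2q^{n}+1$ and $4^{m}-1=2(2^{2m-1}-1)+1$ are all of this form. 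Thus each such factor is exactly the multiplier contributed by one LR design, with tripling as the degenerate case $m=1$.

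For (ii), the seed carrying the compulsory factor of $3$ is the trivial LKTS(3) together with iterated tripling, so $3^{a}$ appears with $a\ge 1$; as every other factor is odd this already forces the order to be $\equiv 3\pmod 6$. The small base systems LKTS(9) and LKTS(15) of Kirkman and Denniston, and the direct constructions of Chang--Ge and Zhou--Chang, fill in the remaining seeds. The LR designs are produced over finite fields: orders $13^{n}$ and $7^{m}$ give the $2\cdot 13^{n}+1$ and $2\cdot 7^{m}+1$ families of part (1); the general prime powers $q^{n}$ with $q\equiv 7\pmod{12}$ give $2q^{n}+1$ in part (2); and the orders $(2^{p}5^{q})^{2}$, the small orders $\{2,3,5,6,8,17,21,33,45,61\}$, and the orders $2^{2m-1}-1$ account respectively for $2^{2p+1}5^{2q}+1$, the exceptional set $r$, and $4^{m}-1$. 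Feeding these into the product construction, with triplings interleaved, then realises every order of the two displayed shapes.

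The crux --- and the origin of all the arithmetic side conditions --- is not this assembly but the existence half of (ii): showing the LR designs really exist for the listed orders. The congruences live here, because the field constructions only close into an LR design when the order meets conditions such as $q\equiv 7\pmod{12}$ or lies in the residue classes concealed in $13^{n}$, $7^{m}$ and $(2^{p}5^{q})^{2}$, while the finite exceptional set $r$ demands separate, often computer-aided, direct constructions. The fussiest point I expect is the clause $a+s+t\ge 2$ when $b\ge 1$ and $r\ne 1$: this is precisely the degenerate configuration in which the factors $5^{b}$ and $r$ would be attached to a seed too small to be a legitimate LKTS input, so I would check separately that a factor of $5$ used alongside a nontrivial $r$ always has a genuine further multiplier available to build an adequate seed.
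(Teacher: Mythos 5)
A preliminary point: the paper itself gives no proof of this statement. It is the introductory Theorem~1.1, a summary of previously known results, quoted with citations to the direct constructions of Kirkman \cite{LKTS9}, Denniston \cite{MR0349416,MR0369086,MR539718}, Chang--Ge \cite{MR1675197,MR2054972} and Zhou--Chang \cite{MR2593325}, and to the recursive constructions of Denniston \cite{MR535159}, Lei \cite{MR1931492} and Zhang--Zhu \cite{MR1893029,MR1948399}. So your attempt can only be judged against how that literature actually assembles the two families, and there it contains a genuine structural error, not merely deferred details.

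The error is your organizing claim that \emph{every} odd factor $2m+1$ --- including $5$ and including each value of $r$ --- is the multiplier supplied by one LR design, with tripling as a degenerate case. That is not how these families arise, and the statement itself refutes it. In Lei's framework the product construction multiplies an LKTS$(v)$ by the \emph{order} $u$ of an LR$(u)$ design, and an LR$(u)$ is by definition a collection of KTS$(u)$s, so it exists only for $u\equiv 3\pmod 6$; consequently no LR-type ingredient can supply a multiplier of $5$, or of $7,11,\dots,123$, all of which are $\equiv 1$ or $5\pmod 6$. The values $r$ do not enter as recursive multipliers at all: they index \emph{direct} base constructions of LKTS$(3r)$ due to Denniston, Chang--Ge and Zhou--Chang, on top of which the genuine recursions (tripling, and LR products with multipliers $2\cdot 13^{n}+1$, $2\cdot 7^{m}+1$, $2q^{n}+1$ with $q\equiv 7\pmod{12}$, and $4^{m}-1$) are applied; the $5^{b}$ part likewise comes from a separate and more restrictive source, not from an LR design. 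You can read this off the shape of the theorem: if each $r$ were a recursive multiplier, orders such as $3\cdot 7\cdot 11=231$ would be covered, yet the statement permits exactly one factor $r$; and if $5$ were a recursive multiplier, $3\cdot 5\cdot 43=645$ would be covered with $a=b=1$, $r=43$, $s=t=0$, which is precisely the configuration the clause $a+s+t\ge 2$ excludes. Your scheme thus ``proves'' strictly more than the theorem asserts --- LKTS$(231)$ and LKTS$(645)$ among others, which the statement conspicuously does not claim --- the unmistakable signature of a wrong reconstruction rather than a stronger one. For the same reason your explanation of the clause $a+s+t\ge 2$ (a ``seed too small to be a legitimate LKTS input'') is not the real obstruction: the obstruction is that the $5^{b}$-type bases and the $r$-type bases are distinct families of designs that cannot be merged without spending at least one additional legitimate multiplier. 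Your picture of part~(2) --- LR designs built over finite fields, with multipliers $2q^{n}+1$ and $4^{m}-1$, applied to the trivial LKTS$(3)$ --- is essentially the correct one; the gap is confined to, but is fatal for, part~(1).
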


To this day, known recursive constructions of LKTS are all product constructions; that is, they construct an LKTS($pq$) from an LKTS($p$) and some auxiliary design on $q$ points. In this paper, we provide a construction of LKTS($q^n+2$) from known LKTS($q+2$), which serves as a good complement to the known product constructions.

\section{Main results}
The main result we will prove in this paper is
\begin{theorem}\label{thm:main}
If there exists an LKTS$(q+2)$ for $q\equiv1\pmod{6}$ a prime power, then there exists an LKTS$(q^n+2)$ for every integer $n\geq 2$.
\end{theorem}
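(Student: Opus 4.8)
The plan is to realise the point set as $X=\mathbb{F}_{q^n}\cup\{\infty_1,\infty_2\}$, where $\mathbb{F}_{q^n}$ is the field with $q^n$ elements containing the subfield $\Fq$. Since $q\equiv1\pmod6$ forces $q^n\equiv1\pmod6$, the order $q^n+2\equiv3\pmod6$ is admissible, and the task is to partition all $\binom{q^n+2}{3}$ triples of $X$ into $q^n$ mutually disjoint resolvable $\mathrm{STS}(q^n+2)$. To make this tractable I would use the additive group $(\mathbb{F}_{q^n},+)$ as a group of automorphisms acting by translation on $\mathbb{F}_{q^n}$ and fixing $\infty_1,\infty_2$, together with the cyclotomic structure of $\mathbb{F}_{q^n}^{*}$; the whole large set should then be generated from a small number of base blocks and base resolutions, with coverage and disjointness reduced to difference conditions in the field.

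The backbone of the argument is a decomposition of the triples according to their intersection with the additive cosets of $\Fq$ in $\mathbb{F}_{q^n}$, which play the role of groups of size $q$ (there are $q^{n-1}$ of them). Triples lying inside a single coset $C$ together with the two infinite points, i.e. triples of $C\cup\{\infty_1,\infty_2\}$, are to be supplied by a translated copy of the assumed $\mathrm{LKTS}(q+2)$ placed on $C\cup\{\infty_1,\infty_2\}$; this is exactly where the hypothesis enters. All remaining triples meet at least two cosets, and these I would cover by base blocks built from the field structure. Here $6\mid q-1$ is essential: the index-$2$ and index-$3$ (cubic) subgroups of $\mathbb{F}_{q^n}^{*}$ furnish the cyclotomic partition needed to write down resolution starters whose translates split into parallel classes, and the prime-power hypothesis is what guarantees these starters exist. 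I emphasise that this cross-coset family cannot be a plain transversal (frame-type) family, since triples with two points in one coset and one in another must also be covered exactly once; the base blocks therefore have to include configurations non-transversal to the coset partition, and arranging these to meet every such pair exactly once is a genuine part of the construction.

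The main obstacle is to meet the three requirements — exact coverage (the large-set property), mutual disjointness of the $q^n$ systems, and resolvability of each — simultaneously, and the sharpest difficulty is the coordination of the two shared infinite points. Because $\infty_1,\infty_2$ belong to every coset block $C\cup\{\infty_1,\infty_2\}$, a naive use of the translated $\mathrm{LKTS}(q+2)$ would produce the triples $\{\infty_1,\infty_2,x\}$ (and re-cover pairs through the infinite points) many times over; the copies must instead be indexed, rotated through the large set and through the cubic classes, so that each triple meeting $\{\infty_1,\infty_2\}$ is produced exactly once. Dovetailed with this is the resolution problem: the parallel classes of each embedded $\mathrm{LKTS}(q+2)$ on $C\cup\{\infty_1,\infty_2\}$ must fuse with the parallel classes of the cross-coset part into genuine parallel classes of all of $X$. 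I would carry out the proof by first constructing the cross-coset field component and its base resolution, then proving a compatibility lemma that lets the $\mathrm{LKTS}(q+2)$ copies be glued in class by class, and finally verifying the difference and cyclotomic identities that certify coverage count one. The verification of this matching lemma, rather than any single block computation, is where I expect the real work to lie.
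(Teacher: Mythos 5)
Your proposal correctly identifies the parameters and shares one idea with the paper --- filling holes of size $q$ (together with the two infinite points) by copies of the assumed LKTS$(q+2)$ --- but it is a plan rather than a proof, and its central object is never constructed. Everything hinges on the ``base blocks built from the field structure'' that must cover all cross-coset triples: you supply no candidate blocks, no coverage argument, no disjointness argument, and no resolution, and you yourself locate ``the real work'' in an unproven matching lemma. Moreover, the decomposition you fix (the $q^{n-1}$ additive cosets of $\Fq$ in $\mathbb{F}_{q^n}$, the \emph{same} partition for every one of the $q^n$ systems) creates structural obstructions that this missing part would have to absorb. A parity count makes this concrete: in any single system, the triple through $\{\infty_1,\infty_2\}$ has its third point in one coset $C_0$; for any other coset $C$, the $q$ pairs $\{\infty_1,y\}$ with $y\in C$ must each lie in a triple $\{\infty_1,y,z\}$ with $z$ finite, and if all such $z$ lay in $C$ these triples would form a perfect matching on the $q$ points of $C$ --- impossible since $q$ is odd. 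Hence in every system, and for every coset except one, your cross-coset part must contain blocks through the infinite points, in addition to blocks meeting one coset twice; it is therefore neither a frame nor a purely cyclotomic object, and no known starter technique (index-$2$/index-$3$ subgroups or otherwise) is offered or exists for it.

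The paper escapes exactly this trap by letting the holes vary with the system, and this is the idea your proposal is missing. It works in the vector space $W=\Fq^n$ and takes, for the system $\mathcal{B}_w$, the holes to be the whole pencil of $(q^n-1)/(q-1)$ affine lines through $w$, each filled by a scaled copy of a member of the LKTS$(q+2)$; the indices are arranged so that all these copies share the single triple $\{w,\infty_1,\infty_2\}$. Consequently \emph{every} triple that is collinear or meets an infinite point lies inside a hole of precisely the right systems (a line contains $q$ choices of $w$, matching the $q$ designs of the small large set), and the cross part of each system consists only of non-collinear triples $\{x,y,z\}$ with $x+y+z=3w$, i.e.\ triples assigned to systems by their centroid $w=(x+y+z)/3$, which makes the large-set property for them immediate. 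These are organized into partial parallel classes via $T(u,v)=\{u+v,\omega u+\omega^2 v,\omega^2 u+\omega v\}$, with $\omega$ a cube root of unity and classes indexed through skew-symmetric bilinear forms on $2$-dimensional subspaces --- this is where $q\equiv 1\pmod 6$ enters. To turn your outline into a proof you would either have to invent the entire missing cross-coset machinery (blocks with infinities included), or switch to this centroid/pencil decomposition, which is what actually makes the theorem go through.
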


LKTS($q+2$) is known for $q=7, 13, 19, 25, 31, 37, 43, 49, 61$ and many more values. By Theorem \ref{thm:main}, we can deduce that:
\begin{cor}
There exists an LKTS($q^n+2$) for $n\geq 1$ and \\ $q\in\{7, 13, 19, 25, 31, 37, 43, 49, 61\}$.
\end{cor}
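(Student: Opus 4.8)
The plan is to treat the corollary as a two-part assertion---the base instance $n=1$ and the recursive range $n\geq 2$---and to show that the range $n\geq 2$ follows from Theorem \ref{thm:main} once its hypotheses are verified, while the case $n=1$ is precisely the input design that Theorem \ref{thm:main} already requires (indeed $q^1+2=q+2$). Thus the whole content of the corollary reduces to two checks: that each listed $q$ is a prime power congruent to $1\pmod 6$, and that $\mathrm{LKTS}(q+2)$ exists for each such $q$.

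First I would dispose of the arithmetic hypothesis. Each $q\in\{7,13,19,25,31,37,43,49,61\}$ equals a multiple of $6$ plus $1$, so $q\equiv1\pmod 6$; and each is either a prime ($7,13,19,31,37,43,61$) or a prime power ($25=5^2$ and $49=7^2$). Hence every $q$ in the list meets the hypothesis of Theorem \ref{thm:main}.

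The substantive step is to produce the base designs $\mathrm{LKTS}(q+2)$, that is, $\mathrm{LKTS}(v)$ for $v\in\{9,15,21,27,33,39,45,51,63\}$. I would obtain all nine from part (1) of the known-results theorem stated in the Introduction by exhibiting an admissible factorization of each $v$: $9=3^2$, $15=3\cdot5$, $21=3\cdot7$, $27=3^3$, $33=3\cdot11$, $39=3\cdot13$, $45=3^2\cdot5$, $51=3\cdot17$ and $63=3^2\cdot7$. The corresponding parameters are $r=7$ for $v\in\{21,63\}$, $r=11$ for $v=33$, $r=13$ for $v=39$, $r=17$ for $v=51$, and $r=1$ for $v\in\{9,15,27,45\}$, with the exponents $a$ and $b$ read off directly and $s=t=0$ throughout; since $7,11,13,17$ all belong to the admissible set for $r$, each $v$ lies in the family. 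The one point demanding attention is the side condition ``$a+s+t\geq 2$ when $b\geq 1$ and $r\neq1$'': among the nine values only $15$ and $45$ have $b\geq 1$, and both take $r=1$, so the condition is never triggered. This yields $\mathrm{LKTS}(q+2)$ for every $q$ in the list, which is simultaneously the $n=1$ instance of the corollary and the hypothesis of Theorem \ref{thm:main}.

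Finally I would assemble the pieces: for each listed $q$ the base case supplies $\mathrm{LKTS}(q^1+2)$, and Theorem \ref{thm:main}---whose hypotheses are now confirmed---supplies $\mathrm{LKTS}(q^n+2)$ for all $n\geq 2$, so that $\mathrm{LKTS}(q^n+2)$ exists for every $n\geq 1$. I expect no genuine difficulty here: the corollary is a direct specialization of Theorem \ref{thm:main}, and the only step requiring real care---hence the main obstacle in the bookkeeping sense---is the correct matching of each $v=q+2$ to an admissible parameter triple of the Introduction's theorem, the side condition above being the sole subtlety.
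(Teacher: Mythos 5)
Your proposal is correct and matches the paper's approach: the paper likewise treats the corollary as an immediate specialization of Theorem \ref{thm:main}, with the $n=1$ case being the known base designs LKTS$(q+2)$ for $q\in\{7,13,19,25,31,37,43,49,61\}$. You simply make explicit the bookkeeping the paper leaves implicit (verifying $q\equiv1\pmod 6$ is a prime power and sourcing each LKTS$(q+2)$, $q+2\in\{9,15,21,27,33,39,45,51,63\}$, from part (1) of the summary theorem in the Introduction, including the correct observation that the side condition $a+s+t\geq2$ is never triggered since $15$ and $45$ use $r=1$), and these checks are all accurate.
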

Taken into account the product constructions in \cite{MR1948399,MR2438170}, we have the following:
\begin{cor}
There exists an LKTS($3^a5^b(q^n+2)\prod^s_{i=1}(2\cdot13^{n_i}+1)\prod^t_{j=1}(2\cdot7^{m_j}+1)$) for integers $q\in\{7, 13, 19, 25, 31, 37, 43, 49, 61\},n_i,m_j\geq 1,a,b,n,s,t\geq 0$.
\end{cor}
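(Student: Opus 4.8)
The plan is to realize the asserted order as $mv$ with seed $v=q^n+2$ and multiplier $m=3^a5^b\prod_{i=1}^s(2\cdot13^{n_i}+1)\prod_{j=1}^t(2\cdot7^{m_j}+1)$, and to build the large set from the seed by iterating the product constructions of Zhang and Zhu \cite{MR1948399} and of \cite{MR2438170}. For $n\geq1$ an LKTS($v$) is furnished by the preceding corollary, while for $n=0$ we have $v=q^0+2=3$ and LKTS($3$) exists trivially (the unique STS($3$) is a single parallel class), the surplus factor $3$ being absorbed into $3^a$. Since $q\equiv1\pmod6$ gives $q^n\equiv1\pmod6$ and hence $v=q^n+2\equiv3\pmod6$, the seed is a legitimate LKTS order; in particular $3\mid v$, and $v>3$ whenever $n\geq1$.

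First I would invoke the cited product constructions in the following black-box form: from an LKTS($w$) one obtains an LKTS($uw$) whenever $u$ is an admissible multiplier, where $3$, every $2\cdot13^{n_i}+1$ and every $2\cdot7^{m_j}+1$ are admissible, and $5$ is admissible subject to a side condition on $w$ discussed below. By the Zhang--Zhu improvement these constructions require no transitivity of the input and return an ordinary LKTS of the enlarged order, so they compose and the factors of $m$ may be introduced one at a time. Introducing first the $3^a$, then the $s$ factors $2\cdot13^{n_i}+1$, then the $t$ factors $2\cdot7^{m_j}+1$, and finally the $5^b$, converts the seed LKTS($v$) into an LKTS($mv$). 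The divisibility $3\mid v$ is preserved at every stage (each multiplier is itself divisible by $3$ or coprime to $6$, and multiplication never destroys a factor of $3$), so every intermediate order stays $\equiv3\pmod6$ and remains an admissible LKTS order.

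The step I expect to be the main obstacle is the introduction of the factor $5$. In part (1) of the summary theorem the factor $5^b$ with $b\geq1$ is admitted only under the extra hypothesis $a+s+t\geq2$ (when $r\neq1$), reflecting that the $5$-multiplier of \cite{MR2438170} needs its base to carry sufficient auxiliary structure --- something a base that is essentially the trivial LKTS($3$) cannot provide. The point to verify is that the seed LKTS($q^n+2$) already supplies this structure: it is a genuine nontrivial large set ($v>3$ for $n\geq1$) whose order is divisible by $3$, and this is exactly the hypothesis under which the $5$-multiplier applies. Thus the role discharged by ``$a+s+t\geq2$'' in part (1) is here discharged by the nontrivial seed itself, which explains why no such restriction survives in the statement. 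The remaining checks --- that the hypotheses of each cited construction are met at every stage and that each intermediate order is $\equiv3\pmod6$ --- are routine and follow by tracking the residues recorded above through the iteration.
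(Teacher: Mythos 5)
Your proposal matches the paper's own treatment: the paper derives this corollary in a single line by applying the product constructions of \cite{MR1948399,MR2438170} to the seed LKTS($q^n+2$) furnished by the preceding corollary, which is exactly the factor-by-factor multiplication of the seed that you describe. Your additional care about the $5$-multiplier's side condition and the degenerate $n=0$ seed goes beyond what the paper records (it offers no details on either point), but it does not change the route --- the argument is the same black-box iteration of the cited product constructions.
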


This result enables us to construct an infinite family of previous unknown LKTS, the smallest of which have order $v=171,345,363,513,627,855,963,1035$ and $1089$.

We will construct an LKTS on the point set $W\bigcup\{\infty_1,\infty_2\}$, where $W$ is a vector space of dimension $n$ over $\Fq$, and $\infty_i$ are two additional points. The details of the construction are exhibited in the next section.

\section{The Construction}
Let $q=6t+1$ be a prime power, $(Y=\Fq\bigcup\{\infty_1,\infty_2\},\left.\mathcal{D}_i\right|_{i\in\Fq})$ be a large set of KTS($q+2$), where we take $\mathcal{D}_i$ to be the design that contains the triple $\{\infty_1,\infty_2,i\}$, and $W=\Fq^n$ be a linear space over $\Fq$. Without loss of generality, let $Q_{i,j} (j=0,1,\ldots,(q-1)/2)$ be the parallel classes of $\mathcal{D}_i$, where $\{\infty_1,\infty_2,i\}\in Q_{i,0}$. We will construct an LKTS($q^n+2$) on the point set $X=W\bigcup\{\infty_1,\infty_2\}$.

For each triple $A\subset Y$ and point $x\in W$, we denote by $Ax$ the set $\{ax|a\in A\}$, where $\infty_i x=\infty_i, i=1,2$.

The main ingredient of our construction is a large set of "frames" over $W$, resolvable into partial parallel classes, and invariant under translations in $W$. The triples in the "base" frame are exactly those of the form $\{x,y,-x-y\}$ where $x$ and $y$ are independent. The "holes" within each frame are 1-dimensional affine subspaces of $W$ (or geometrically, lines in $W$) that passes a given point; We make use of the triples from the original large set $Y$ to fill in the holes.


We will only describe the "base" frame, which contains all the zero-sum triples that are non-collinear. It is easy to show that the total number of such triples are $(q^n-q)(q^n-1)^2/6$. We will describe a partition of these triples into $(q^n-1)/2$ partial parallel classes, where each class contains $(q^n-q)/3$ triples.

We fix a family of skew-symmetric bilinear forms $f_L:L\times L \rightarrow\Fq$ for each 2-dimensional subspace $L$ of $W$.

Let $g$ be a primitive root of $q$, and $\omega=g^{2t}$ be a third root of unity in $\Fq$. We define, for each ordered linearly independent pair of points $(u,v)\in W^2$, the triple
\begin{equation}\label{eqn:T}
T(u,v)=\{u+v,\omega u+\omega^2v,\omega^2u+\omega v\}.\end{equation}
It is easy to see that $T(u,v)=T(\omega u,\omega^2 v)=T(\omega^2 u,\omega v)$. The following lemma gives the basis of our construction.

\begin{lemma}\label{lem:partialclass}
Let $K$ be a 1-dimensional subspace of $W$, and $L\supset K$ is an 2-dimensional space containing $K$. For any $u\in K$ and $c\in \Fq^*$, the $q(q-1)/3$ triples $\{\pm T(u,v)|v\in L, f_L(u,v)=g^mc,0\leq m<t\}$ is a partition of $L \setminus K$. We will denote this set by $P_{u,L,c}$.
\end{lemma}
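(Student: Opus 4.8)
The plan is to fix coordinates on $L$ and reduce the statement to a counting problem about cosets of the order-$6$ subgroup of $\Fq^*$. I fix a nonzero $u\in K$, so that $K=\langle u\rangle$, and choose $w\in L\setminus K$ so that $\{u,w\}$ is a basis of $L$, writing each $v\in L$ as $v=\alpha u+\beta w$. Since $q$ is odd, a skew-symmetric form is alternating, so $f_L(u,u)=0$ and $f_L(u,v)=\beta\,d$ with $d:=f_L(u,w)\neq 0$ (here I use, as is implicit in the hypothesis, that $f_L$ is nondegenerate, which on a $2$-dimensional space means simply $f_L\neq 0$). Thus the defining condition $f_L(u,v)=g^m c$ is equivalent to $\beta=g^m c'$ with $c':=c/d$, and in particular it constrains only the $w$-coordinate of $v$, leaving $\alpha$ free.

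The key algebraic observation is that $H:=\{\pm\omega^i: i=0,1,2\}$ is precisely the unique subgroup $\langle g^{t}\rangle$ of order $6$ in the cyclic group $\Fq^*$: indeed $-1=g^{3t}$ and $\omega=g^{2t}$ generate it, and $g^t$ has order $6$. Since $\langle g^t\rangle$ has index $t$, the elements $g^0,\dots,g^{t-1}$ form a transversal of $H$, and hence so does $\{g^m c':0\le m<t\}$; that is, this set meets each coset of $H$ in exactly one point. Writing $T(u,v)=\{\omega^i u+\omega^{2i}v:i=0,1,2\}$, a direct computation in coordinates shows that the six points of $\pm T(u,v)$ have $w$-coordinates $\pm\beta,\pm\omega\beta,\pm\omega^2\beta$, i.e.\ exactly the coset $H\beta$. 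In particular every point of a triple in $P_{u,L,c}$ has nonzero $w$-coordinate and so lies in $L\setminus K$, and the three points of a single $T(u,v)$ are distinct.

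I then fix a point $p_0=\alpha_0 u+\beta_0 w\in L\setminus K$ (so $\beta_0\neq 0$) and count the triples of $P_{u,L,c}$ through it. Solving $p_0=\omega^i u+\omega^{2i}v$ and $-p_0=\omega^i u+\omega^{2i}v$ for $i=0,1,2$ shows that the set $V(p_0)$ of all $v$ with $p_0\in\pm T(u,v)$ has exactly six elements, whose $w$-coordinates are the six distinct elements of the coset $H\beta_0$. By the transversal property, exactly one of these six $v$ satisfies $\beta\in\{g^m c':0\le m<t\}$, so $P_{u,L,c}$ contains exactly one triple through $p_0$ --- provided $T(u,v)$ and $-T(u,v)$ do not both contain $p_0$ for this surviving $v$; but that would force $0=p_0+(-p_0)$ to be the third point of a zero-sum triple, contradicting that all points of the triple have nonzero $w$-coordinate. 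Hence every point of $L\setminus K$ lies in exactly one triple of $P_{u,L,c}$, which is the asserted partition; counting $|L\setminus K|=q(q-1)$ points three at a time yields the stated $q(q-1)/3$ triples.

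The main obstacle I anticipate is the bookkeeping in this last step: one must rule out both the possibility that a point lies in two distinct triples (handled by the single-coset structure of the $w$-coordinates together with the transversal property) and the possibility that $T(u,v)$ and $-T(u,v)$ coincide on $p_0$ (handled by the zero-sum/nonzero-coordinate argument). In particular the cyclic symmetry $T(u,v)=T(\omega u,\omega^2 v)$ does \emph{not} identify triples sharing the fixed first argument $u$, so some care is needed to see that the six solutions collected in $V(p_0)$ genuinely collapse to a single admissible triple rather than several.
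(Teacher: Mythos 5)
Your proof is correct, and it takes a recognizably different route from the paper's, although both rest on the same algebraic core. The paper argues globally: since the listed triples number $q(q-1)/3$ and $|L\setminus K|=q(q-1)$, it suffices to show that every point of $L\setminus K$ is covered \emph{at least} once; coverage is then verified by computing the union directly, first $\bigcup_{b\in\Fq}T(u,cv_0+bu)=\{bu+\omega^a cv_0\mid a=0,1,2,\ b\in\Fq\}$ for a fixed $v_0$ with $f_L(u,v_0)=1$, and then over the signs and the exponents $m$, using that $\{\pm\omega^a g^m\mid a=0,1,2,\ 0\le m<t\}=\Fq^*$. You instead prove the stronger pointwise statement outright --- every $p_0\in L\setminus K$ lies in \emph{exactly} one triple --- by listing the six candidate vectors $v$ with $p_0\in\pm T(u,v)$, observing their $w$-coordinates form precisely the coset $H\beta_0$ of the order-$6$ subgroup $H=\{\pm\omega^i\}=\langle g^t\rangle$, and invoking the transversal property of $\{g^m\colon 0\le m<t\}$. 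The identity $\{\pm\omega^a g^m\}=\Fq^*$ used by the paper and your coset decomposition $\Fq^*=\bigsqcup_{0\le m<t}g^m H$ are the same fact in two guises, so the difference is one of organization: the paper's argument is shorter but obtains ``exactly once'' only indirectly through cardinality, while yours dispenses with the counting reduction at the cost of the uniqueness bookkeeping (the six distinct candidates and the sign disambiguation, both of which you handle correctly). A side benefit of your write-up is that it makes explicit the hypothesis, implicit in the paper, that each $f_L$ is nonzero (equivalently, nondegenerate on $L$), without which the defining condition $f_L(u,v)=g^mc$ would be vacuous and the lemma false.
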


\begin{proof}
As $L\setminus K$ have $q(q-1)$ elements, it suffices to prove that every element in $L \setminus K$ belongs to one of such triple.

Let $v_0\in L$ such that $f_L(u,v_0)=1$. For a given $c\in \Fq^*$, the set of vectors $v$ such that $f_L(u,v)=c$ are exactly $cv_0+bu$ for $b\in \Fq$,

Thus we have $T(u,cv_0+bu)=\{(1+b)u+cv_0,(\omega+b\omega^2)u+\omega^2cv_0,(\omega^2+b\omega)u+\omega cv_0\}$, thus $\bigcup_{f_L(u,v)=c}T(u,v)=\bigcup_{b\in\Fq}T(u,cv_0+bu)=\{bu+\omega^acv_0|a=0,1,2,b\in\Fq\}$

Therefore $\bigcup_{f_L(u,v)=g^mc}(\pm T(u,v))=\{\pm bu\pm\omega^ag^mcv_0|a=0,1,2,b\in\Fq,0\leq m<(q-1)/6\}=\{bu+av_0|b\in\Fq,a\in\Fq^{*}\}=L \setminus K$.
\end{proof}

\begin{lemma}Let $L_{K}$ be the set of all 2-dimensional subspaces of $W$ containing $K$ (so that $|L_K|=\frac{q^{n-1}-1}{q-1}$), then the sets $\{L\setminus K|{L\in L_{K}}\}$ give a partition of $W\setminus K$.
\end{lemma}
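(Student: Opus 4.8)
The plan is to verify directly the two defining properties of a partition — that the sets $L\setminus K$ cover $W\setminus K$, and that they are pairwise disjoint — both of which rest on a single elementary observation: a $2$-dimensional subspace containing $K$ is completely determined by $K$ together with any one point it contains outside $K$. Throughout I would use that for $w\notin K$ we have $\langle w\rangle\cap K=\{0\}$, so that the sum $K+\langle w\rangle$ is genuinely $2$-dimensional.

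First I would establish coverage. Given any $w\in W\setminus K$, the subspace $L:=K+\langle w\rangle$ has dimension $2$, contains $K$, and contains $w$; hence $L\in L_K$ and $w\in L\setminus K$. Next I would prove disjointness: suppose $w\in(L_1\setminus K)\cap(L_2\setminus K)$ for $L_1,L_2\in L_K$. Then $w\notin K$, so $K+\langle w\rangle$ is $2$-dimensional, and since it is contained in each $L_i$ while each $L_i$ is itself $2$-dimensional, we get $L_1=K+\langle w\rangle=L_2$. Thus distinct members of $L_K$ give disjoint punctured sets, and together with coverage this yields the claimed partition.

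For the stated cardinality I would pass to the quotient space $W/K$, of dimension $n-1$: the map $L\mapsto L/K$ is a bijection between the $2$-dimensional subspaces of $W$ containing $K$ and the $1$-dimensional subspaces of $W/K$, of which there are exactly $\frac{q^{n-1}-1}{q-1}$, giving $|L_K|$. Indeed the entire lemma can be read off from this quotient picture, since the punctured lines of $W/K$ partition its nonzero vectors and the preimage map $\pi^{-1}$ carries that partition to the one asserted, with $\pi^{-1}\big((W/K)\setminus\{0\}\big)=W\setminus K$ and $\pi^{-1}(\ell\setminus\{0\})=L\setminus K$ for $L=\pi^{-1}(\ell)$.

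As a consistency check I would note the counts agree: each $L\setminus K$ has $q(q-1)$ points, and $q(q-1)\cdot\frac{q^{n-1}-1}{q-1}=q^n-q=|W\setminus K|$. Since every step is routine linear algebra, there is no substantive obstacle here; the only point requiring care is confirming $\langle w\rangle\cap K=\{0\}$ so that the relevant spans really have dimension $2$ and the correspondence with lines in $W/K$ is a genuine bijection.
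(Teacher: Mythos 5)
Your proof is correct: coverage via $L = K+\langle w\rangle$, disjointness because a $2$-dimensional subspace containing $K$ and a point $w\notin K$ is forced to equal $K+\langle w\rangle$, and the count via the quotient $W/K$ are all sound. The paper states this lemma without any proof, treating it as routine linear algebra, so your argument simply supplies the standard verification the authors took for granted.
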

\begin{cor}\label{lem:partialclass2}
The set $P_{u,c}=\bigcup_{L\in U_{K_i}}P_{u,L,c}$ gives a partition of $W\setminus K_i$.
\end{cor}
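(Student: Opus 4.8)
The plan is to obtain this corollary by simply gluing together the two preceding results. The intermediate (unnumbered) lemma tells us that $\{L\setminus K_i \mid L\in U_{K_i}\}$ is a partition of $W\setminus K_i$, where $U_{K_i}$ is the set of $2$-dimensional subspaces containing $K_i$; that is, the sets $L\setminus K_i$ are pairwise disjoint and their union is all of $W\setminus K_i$. (Concretely, two distinct $2$-dimensional subspaces containing $K_i$ meet in exactly $K_i$, so deleting $K_i$ renders them disjoint, while every point outside $K_i$ lies in the span of itself together with $K_i$.) Meanwhile, Lemma \ref{lem:partialclass} guarantees that for each fixed $L\in U_{K_i}$ the family $P_{u,L,c}$ is a partition of $L\setminus K_i$ into triples.

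First I would record the one fact that makes this gluing legitimate: every triple occurring in $P_{u,L,c}$ is contained in the single piece $L\setminus K_i$. This is immediate from the definition $T(u,v)=\{u+v,\omega u+\omega^2 v,\omega^2 u+\omega v\}$, since with $u\in K_i\subset L$ and $v\in L$ all three points are $\Fq$-linear combinations of $u$ and $v$ and hence lie in $L$; Lemma \ref{lem:partialclass} moreover places them in $L\setminus K_i$. In particular no triple of $P_{u,c}$ straddles two different subspaces of $U_{K_i}$.

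Next I would assemble the partition. Take any point $x\in W\setminus K_i$. By the intermediate lemma $x$ lies in exactly one piece $L\setminus K_i$, and within that piece it lies in exactly one triple of $P_{u,L,c}$ by Lemma \ref{lem:partialclass}. Hence $x$ belongs to exactly one triple of $P_{u,c}=\bigcup_{L\in U_{K_i}}P_{u,L,c}$, which is precisely the assertion that $P_{u,c}$ partitions $W\setminus K_i$.

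Because both combinatorial ingredients are already in hand, there is no real obstacle here; the only point deserving (minor) attention is the disjointness bookkeeping of the second paragraph. Two triples drawn from different $P_{u,L,c}$ cannot share a point, since distinct pieces $L\setminus K_i$ are themselves disjoint, and two triples from the same $P_{u,L,c}$ are disjoint by Lemma \ref{lem:partialclass}; this is all that is needed to upgrade the union of local partitions into a global one.
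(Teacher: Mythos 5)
Your proof is correct and follows exactly the route the paper intends: the paper states this corollary without proof precisely because it is the immediate gluing of Lemma \ref{lem:partialclass} (each $P_{u,L,c}$ partitions $L\setminus K_i$) with the unnumbered lemma (the sets $L\setminus K_i$, $L\in L_{K_i}$, partition $W\setminus K_i$). Your additional observation that every triple $T(u,v)$ lies entirely inside a single subspace $L$ is the right point to make explicit, and nothing further is needed.
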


We will now construct the frame from the partial parallel classes defined above.

Let $U=\{K_i|1\leq i\leq (q^n-1)/(q-1)\}$ be the set of 1-dimensional subspaces of $W$. We choose a generator $u_i\in K_i$ in each $K_i$, and denote $V=\{u_i|1\leq i\leq (q^n-1)/(q-1)\}$.

\begin{lemma}\label{lem:blocks}
The union of partial parallel classes $P_{g^au_i,\omega^b}$ where $0\leq a<t$, $b=0,1,2$ and $1\leq i\leq q+1$ contains every zero-sum non-collinear triple exactly once.
\end{lemma}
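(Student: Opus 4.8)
\medskip
\noindent The plan is to turn the statement into a pure covering problem by a counting argument, and then to establish covering by attaching to each admissible triple a unique ``canonical'' representation. First I would count. By Corollary~\ref{lem:partialclass2} each set $P_{g^a u_i,\omega^b}$ is a partition of $W\setminus K_i$ into triples, and since $q\equiv1\pmod 3$ we have $3\mid q^n-q$, so it consists of exactly $(q^n-q)/3$ triples; moreover every triple $\pm T(u,v)$ occurring in it is a zero-sum non-collinear triple (zero-sum because $1+\omega+\omega^2=0$, non-collinear because $f_{L}(u,v)\neq0$ forces $u,v$ independent). Summing over the $t=(q-1)/6$ choices of $a$, the $3$ choices of $b$, and the $(q^n-1)/(q-1)$ one-dimensional subspaces $K_i$ gives a total, counted with multiplicity, of
\[
\frac{q-1}{6}\cdot 3\cdot\frac{q^n-1}{q-1}\cdot\frac{q^n-q}{3}=\frac{(q^n-1)(q^n-q)}{6},
\]
which is exactly the number of zero-sum non-collinear triples in $W$. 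Hence it suffices to prove that every such triple lies in \emph{at least} one class; equality of the counts then upgrades this to ``exactly once''.

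Next I would set up the canonical representation. A zero-sum non-collinear triple $\tau$ spans a unique $2$-dimensional subspace $L$, and writing $\tau=T(u,v)$ with $(u,v)$ independent, the six ordered pairs recovering $\tau$ are $(u,v),(\omega u,\omega^2 v),(\omega^2 u,\omega v)$ together with their swaps, so the possible ``first arguments'' of $\tau$ form $\langle\omega\rangle u\cup\langle\omega\rangle v$, while those of $-\tau=T(-u,-v)$ form $(-\langle\omega\rangle)u\cup(-\langle\omega\rangle)v$. Writing $H=\langle g^{t}\rangle=\{\pm\omega^b:b=0,1,2\}$ (the subgroup of order $6$), the first arguments arising from $\tau$ and from $-\tau$ together are exactly $Hu\cup Hv$. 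A direct check using $f_L(\alpha x,\beta y)=\alpha\beta f_L(x,y)$ and $\omega^3=1$ shows that for every first argument lying in $\langle u\rangle$ the associated value $f_L(\text{first},\text{second})$ equals $\phi:=f_L(u,v)$, whereas for every first argument in $\langle v\rangle$ it equals $-\phi$.

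Finally I would normalize and choose $b$. Since $\{g^a:0\le a<t\}$ is a transversal of $H$ in $\Fq^*$, each of the two lines $\langle u\rangle,\langle v\rangle$ contains exactly one first argument of the required shape $g^a u_i$ with $0\le a<t$; call them $w_u$ and $w_v$. By the previous paragraph, $\tau\in P_{w_u,\omega^b}$ iff $\phi\in\{g^m\omega^b:0\le m<t\}$, and $\tau\in P_{w_v,\omega^b}$ iff $-\phi\in\{g^m\omega^b:0\le m<t\}$. Setting $N=\{g^m\omega^b:0\le m<t,\ b=0,1,2\}$, one checks that $N$ and $-N$ partition $\Fq^*$; therefore exactly one of $\phi,-\phi$ lies in $N$, which pins down the line to use and a unique $b$, and exhibits the (unique) class containing $\tau$.

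I expect the main obstacle to be bookkeeping rather than depth: one must track the sign $\pm$ simultaneously with the cube-root symmetry $T(u,v)=T(\omega u,\omega^2 v)$ and verify that on a fixed line the roles ``$w$ is a first argument of $\tau$'' and ``$w$ is a first argument of $-\tau$'' are mutually exclusive, so that $\tau$ is not produced twice inside a single class. The clean facts that make everything work are the two partitions $\Fq^*=N\sqcup(-N)$ and $\Fq^*=\bigsqcup_{0\le a<t}g^aH$; isolating and proving these is the crux, after which the claim follows formally (and in fact yields uniqueness directly, so the counting step above becomes a consistency check rather than a logical necessity).
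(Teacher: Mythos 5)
Your proposal is correct, and it shares the paper's overall skeleton: both first reduce ``exactly once'' to ``at least once'' via the same count $\tfrac{(q^n-1)(q^n-q)}{6}$ (so you both implicitly read the statement's ``$1\leq i\leq q+1$'' as a typo for all $\tfrac{q^n-1}{q-1}$ one-dimensional subspaces), and both exploit the same symmetries $T(u,v)=T(\omega u,\omega^2v)=T(v,u)$, $-T(u,v)=T(-u,-v)$, the scaling of $f_L$, and the partition $\Fq^*=N\sqcup(-N)$. Where you genuinely diverge is in how the covering step is run. The paper starts from the triple $\{x,y,-x-y\}$ and \emph{constructs} a representation by explicit formulas $u_0=(1-\omega^2)(\omega x-y)/3$, $v_0=(1-\omega^2)(\omega y-x)/3$, then normalizes: a ``without loss of generality'' swap forces $f_L(u_0,v_0)=g^m\omega^b$ with $0\le m<t$, and writing $u_0=g^{ct+a}u_i$ with $d=ct+a$ lands the triple in $P_{g^au_i,\omega^b}$; uniqueness is never argued directly, only inferred from the count. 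You instead classify all twelve ordered pairs representing $\tau$ and $-\tau$, observe that their first arguments form $Hu\cup Hv$ with $H=\langle g^t\rangle$, and combine the transversal $\{g^a:0\le a<t\}$ with $\Fq^*=N\sqcup(-N)$ to show that \emph{exactly} one admissible class exists. This buys uniqueness directly and demotes the counting to a consistency check, which is a cleaner logical structure.

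The one point you gloss over---and precisely the point the paper's explicit formulas deliver---is the \emph{existence} of a representation $\tau=T(u,v)$ at all: the phrase ``writing $\tau=T(u,v)$ with $(u,v)$ independent'' presupposes that $T$ is surjective onto zero-sum non-collinear triples. This is easily patched in one line (solve $u+v=x$, $\omega u+\omega^2v=y$; the coefficient matrix has determinant $\omega^2-\omega\neq0$, and the third equation $\omega^2u+\omega v=-x-y$ is automatic since $1+\omega+\omega^2=0$), or it can be recovered from your own observation that each triple has exactly six representing pairs, since then the image of $T$ has size at least $(q^n-1)(q^n-q)/6$, which is all such triples. But as written it is an unproven step, so add that line; with it, your argument is complete and in fact slightly stronger than the paper's, since it proves multiplicity one without appealing to the count.
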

\begin{proof}
The total number of triples in these partial parallel classes is $\frac{q(q-1)}{3}\cdot\frac{q^{n-1}-1}{q-1}\cdot\frac{q-1}{6}\cdot 3\cdot\frac{q^n-1}{q-1}=(q^n-q)(q^n-1)/6$, which agrees with the total number of zero-sum non-collinear triples. We only need to prove that every such triple $\{x,y,-x-y\}$ belongs to at least one of these classes.

Let $u_0=(1-\omega^2)(\omega x-y)/3$ and $v_0=(1-\omega^2)(\omega y-x)/3$ so that we have $\{x,y,-x-y\}=T(u_0,v_0)$. Let $L$ be the subspace spanned by $x$ and $y$, we have $f_L(u_0,v_0)=(\omega^2-\omega)f_L(x,y)/3$. Since $f_L(u_0,v_0)=-f_L(v_0,u_0)$, without loss of generality, we can assume that $f_L(u_0,v_0)=g^m\omega^b$ with $0\leq m<(q-1)/6$.

Assume now that $u_0=g^du_i$ with $u_i\in V$ and $0\leq d<q-1=6t$. We write $d=ct+a$ where $c=0,1,2,3,4,5$ and $0\leq a<t$. We can now conclude that $\{x,y,-x-y\}=T(u_0,v_0)=(-1)^c T(\omega^{-c}g^au_i,v_0)=(-1)^c T(g^au_i,\omega^{-c}v_0)$ belongs to $P_{g^au_i,L,\omega^b}$ and therefore $P_{g^au_i,\omega^b}$.

\end{proof}

It is an immediate conclusion from the above lemma that the union of all translations of the partial parallel classes contains every non-collinear triple in $W$.

We now proceed to fill the holes in the frame. For each $K_i\in U$, we construct a system of representatives $R_i$ of the cosets of $K_i$, and an associated function $p_i:W\rightarrow\Fq$ such that $x-p_i(x)u_i\in R_i$ for all $x\in W$.

Recall that $Q_{i,j}$ are the parallel classes of $\mathcal{D}_i$ and that $\{\infty_1,\infty_2,i\}\in Q_{i,0}$.

\begin{theorem}\label{thm:KTS}
For each $w\in W$, the following parallel classes
\begin{equation}\label{eqn:classes}
P_{w,u_i,a+bt+1}=(w+P_{g^au_i,\omega^b})\bigcup\{w-p_i(w)u_i+Au_i|A\in Q_{p_i(w),a+bt+1}\}
\end{equation}
for $u_i\in V$, $0\leq a<t$ and $b=0,1,2$
and
\begin{equation}\label{eqn:lastclass}
P_{w,*}=\bigcup_{u_i\in V}\{w-p_i(w)u_i+Au_i|A\in Q_{p_i(w),0}\}
\end{equation}
constitutes a KTS($q^n+2$), which we will denote by $\mathcal{B}_w$.
\end{theorem}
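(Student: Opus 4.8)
The plan is to verify the two defining properties of a KTS($q^n+2$): that each of the listed families of triples is a \emph{parallel class} (a partition of $X=W\cup\{\infty_1,\infty_2\}$), and that the resulting block set forms an \emph{STS}, i.e.\ every pair of points of $X$ lies in exactly one block. First I would record that the number of classes is $\frac{q^n-1}{q-1}\cdot\frac{q-1}{2}+1=\frac{q^n+1}{2}$, which is the correct number $\frac{(q^n+2)-1}{2}$ of parallel classes for a KTS($q^n+2$), so it suffices to establish the two properties above. Throughout I would work with the bijection $\phi_i\colon Y\to(w+K_i)\cup\{\infty_1,\infty_2\}$ given by $\phi_i(a)=w-p_i(w)u_i+au_i$ for $a\in\Fq$ and $\phi_i(\infty_s)=\infty_s$; by the defining property of $p_i$ this carries the coset arithmetic of $\mathcal{D}_{p_i(w)}$ onto the affine line $w+K_i$, and crucially $\phi_i(p_i(w))=w$.

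For the parallel-class property, consider first $P_{w,u_i,a+bt+1}$ with $a+bt+1\ge 1$. The translated frame $w+P_{g^au_i,\omega^b}$ partitions $W\setminus(w+K_i)$ by Corollary \ref{lem:partialclass2}, while the fill is $\phi_i(Q_{p_i(w),a+bt+1})$; since $Q_{p_i(w),a+bt+1}$ is a parallel class of $\mathcal{D}_{p_i(w)}$, its $\phi_i$-image partitions $(w+K_i)\cup\{\infty_1,\infty_2\}$, and the two pieces together partition $X$. For $P_{w,*}$ the contribution of each $u_i$ is $\phi_i(Q_{p_i(w),0})$. Because $\{\infty_1,\infty_2,p_i(w)\}\in Q_{p_i(w),0}$ and $\phi_i(p_i(w))=w$, this block maps to the single triple $\{\infty_1,\infty_2,w\}$ for \emph{every} $i$, so all these images coincide; the remaining finite blocks of $Q_{p_i(w),0}$ partition $(w+K_i)\setminus\{w\}$. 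As the lines through $w$ partition $W\setminus\{w\}$ and pairwise meet only in $w$, the union $P_{w,*}$ is exactly $\{\infty_1,\infty_2,w\}$ together with a partition of $W\setminus\{w\}$, i.e.\ a partition of $X$.

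The heart of the argument is the pair-coverage count. I would split the blocks of $\mathcal{B}_w$ into the frame part $\bigcup_{i,a,b}(w+P_{g^au_i,\omega^b})$ and the fill part $\bigcup_i\phi_i(\mathcal{D}_{p_i(w)})$ — the latter identification holds because, as $(a,b)$ ranges over $\{0,\dots,t-1\}\times\{0,1,2\}$ and one adjoins $P_{w,*}$, the indices $a+bt+1$ together with $0$ exhaust $\{0,1,\dots,(q-1)/2\}$, so for each fixed $i$ every parallel class of $\mathcal{D}_{p_i(w)}$ is used exactly once. By Lemma \ref{lem:blocks} and translation by $w$, the frame part is precisely the collection of non-collinear triples $\{p,q,r\}$ with $p+q+r=3w$, each occurring once; a finite pair $\{p,q\}$ extends to such a triple (necessarily with $r=3w-p-q$) if and only if $w,p,q$ are non-collinear, in which case it does so uniquely. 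Hence the frame covers exactly the finite pairs non-collinear with $w$, each once. Every remaining pair lies inside some set $(w+K_i)\cup\{\infty_1,\infty_2\}$, where the fill part restricts to the STS $\phi_i(\mathcal{D}_{p_i(w)})$ and covers that pair once; since distinct $K_i$ meet only in $0$ (so distinct cosets meet only in $w$) and all the infinity-blocks collapse to $\{\infty_1,\infty_2,w\}$, the only pairs lying in more than one of these $(q+2)$-sets are $\{\infty_1,\infty_2\}$, $\{\infty_1,w\}$, $\{\infty_2,w\}$, which are exactly the three pairs of the single coincident block and hence are still covered once. Combining the two parts, every pair of $X$ lies in exactly one block.

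The step I expect to be most delicate is precisely this reconciliation at the centre $w$ and at infinity: one must check that the images of the $\frac{q^n-1}{q-1}$ distinct infinity-blocks genuinely coincide (which rests on $\phi_i(p_i(w))=w$) rather than merely being disjoint, and that this coincidence absorbs exactly the pairs $\{\infty_1,\infty_2\}$ and $\{\infty_s,w\}$ that would otherwise be multiply covered. The complementary bookkeeping — that the frame handles the pairs non-collinear with $w$ while the fills handle those collinear with $w$, with no finite pair counted twice — then follows from the fact that two distinct $1$-dimensional subspaces intersect trivially, so that two finite points are collinear with $w$ exactly when they lie on a common coset $w+K_i$.
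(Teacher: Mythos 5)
Your proposal is correct, and its first half --- each $P_{w,u_i,a+bt+1}$ is a parallel class because the translated frame partitions $W\setminus(w+K_i)$ while the fill partitions $(w+K_i)\cup\{\infty_1,\infty_2\}$, and in $P_{w,*}$ the images of the blocks $\{\infty_1,\infty_2,p_i(w)\}$ all coalesce into $\{\infty_1,\infty_2,w\}$ --- is exactly the paper's argument. Where you genuinely diverge is the finishing step. The paper uses a counting shortcut: since the number of classes is $(q^n+1)/2$, the total block count is $\binom{q^n+2}{2}/3$, so it suffices to exhibit, for every pair, \emph{some} block containing it; this is then done by a four-case analysis that mirrors your frame/fill split. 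You instead prove the exactly-once property directly: you note that the indices $a+bt+1$ together with $0$ exhaust $\{0,1,\dots,(q-1)/2\}$, so the fills along a fixed line assemble into a complete isomorphic copy $\phi_i(\mathcal{D}_{p_i(w)})$ of the small system; you then check that these copies pairwise meet only in the common block $\{\infty_1,\infty_2,w\}$, while the frame, by Lemma \ref{lem:blocks} translated by $w$, covers precisely the finite pairs non-collinear with $w$, each once. Your route is longer but self-contained, and it makes explicit two points the paper leaves implicit: the index-exhaustion fact (which the paper's cases 2--3 silently rely on when they take an arbitrary block $B_0$ of $\mathcal{D}_{p_i(w)}$ and assert that its image lies in one of the classes of $\mathcal{B}_w$), and the reconciliation of multiply-counted pairs at $w$ and at infinity. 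What the paper's route buys in exchange is brevity: once the class count is verified, only ``at least once'' coverage needs to be checked, and no disjointness bookkeeping is required.
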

\begin{proof}
By Lemma \ref{lem:partialclass}, $w+P_{g^au_i,\omega^b}$ is a partition of $W\setminus (w+K_i)$, while $\{w-p_i(w)x_i+Au_i|A\in Q_{p_i(w),a+bt+1}\}$ is a partition of $\{\infty_1,\infty_2\}\bigcup(w+K_i)$. Therefore $P_{w,u_i,a+bt+1}$ is indeed a parallel class. On the other hand, for each $u_i\in V$, $\{w-p_i(w)x_i+Au_i|A\in Q_{p_i(w),0}\}$ contains the triple $\{w,\infty_1,\infty_2\}$ as well as other $(q-1)/3$ triples that gives a partition of $w+(K_i\setminus{0})$, so the union of these is also a parallel class.

We now prove that $\mathcal{B}_w$ is an STS, and therefore a KTS by the argument above. The number of parallel classes in $\mathcal{B}_w$ equals $(q^n+1)/2$, which agrees with the expected number of a KTS($q^n+2$), so it suffices to find a triple $B$ containing $\{x,y\}$ in one of these classes for all $x,y\in X$.

We distinguish the following cases.

\begin{enumerate}
  \item $\{x,y\}=\{\infty_1,\infty_2\}$. We know that $\{w,\infty_1,\infty_2\}\in P_{w,*}$, so we can take $B=\{w,\infty_1,\infty_2\}$.
  \item $y\in \{\infty_1,\infty_2\}$, $x\in W$. If $x=w$ we take $B=\{w,\infty_1,\infty_2\}$ as above. Otherwise, let $x=au_i+w$ where $a\in\mathbb{F}_q$, then there is a unique block $B_0$ containing $\{a-p_i(w),y\}$ in $\mathcal{D}_{p_i(w)}$. Take $B=w-p_i(w)u_i+B_0u_i$.
  \item $\{x,y\}\subset W$, and $x-w,y-w$ are linear dependent. Similarly, let $x=au_i+w, y=bu_i+w$ where $a,b\in\mathbb{F}_q$, there is a unique block $B_0$ containing $\{a-p_i(w),b-p_i(w)\}$ in $\mathcal{D}_{p_i(w)}$. Take $B=w-p_i(w)u_i+B_0u_i$.
  \item $\{x,y\}\subset W$, and $x-w,y-w$ are linear independent. By Lemma \ref{lem:blocks}, the triple $\{x-w,y-w,2w-x-y\}$ belongs to one of the partial parallel classes $P_{g^au_i,\omega^b}$. Take $B=\{x,y,3w-x-y\}\in w+P_{g^au_i,\omega^b} \subset P_{w,u_i,a+bt+1}$.
\end{enumerate}
\end{proof}

\begin{theorem}\label{thm:LKTS}
The set $\{\mathcal{B}_w\}_{w\in W}$ is an LKTS($q^n+2$).
\end{theorem}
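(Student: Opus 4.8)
The plan is to show that the $q^n$ designs $\{\mathcal{B}_w\}_{w\in W}$ are pairwise disjoint, which is exactly what is needed for a large set: since $|W|=q^n=(q^n+2)-2$ and each $\mathcal{B}_w$ is already a KTS by Theorem \ref{thm:KTS}, a pairwise disjoint family of this size automatically partitions all triples. First I would record the counting identity. Each $\mathcal{B}_w$ is an STS($q^n+2$) with $(q^n+2)(q^n+1)/6$ triples, so the total number of (design, triple) incidences is $q^n(q^n+2)(q^n+1)/6=\binom{q^n+2}{3}$, the number of triples of $X$. Consequently, to prove the family is a large set it suffices to show that every triple of $X$ lies in \emph{at least one} $\mathcal{B}_w$; the counting then forces each triple to lie in exactly one, yielding disjointness.

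The task thus reduces to a covering statement, which I would prove by cases on the type of the triple $B$. For a non-collinear triple $\{x,y,z\}\subset W$, the frame part handles it: setting $w=(x+y+z)/3$ (well-defined since $3$ is invertible in $\Fq$), the translated triple $\{x-w,y-w,z-w\}$ is zero-sum and non-collinear, so Lemma \ref{lem:blocks} together with case 4 of Theorem \ref{thm:KTS} places $B$ in $\mathcal{B}_w$. For the triple $\{\infty_1,\infty_2,x\}$, the block $\{\infty_1,\infty_2,p_i(x)\}\in Q_{p_i(x),0}$ maps to $\{\infty_1,\infty_2,x\}$ in $P_{x,*}$, so $B$ lies in $\mathcal{B}_x$.

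The remaining cases, a collinear triple in $W$ or a triple $\{\infty_k,x,y\}$ with a single point at infinity, are filled from the holes, and here the large-set property of $(\mathcal{D}_i)_{i\in\Fq}$ does the work. In each case the finite points determine a unique direction $K_i$: for a collinear triple it is the direction of the line through the three points, and for $\{\infty_k,x,y\}$ it is the line spanned by $x-y$. Writing the coset representative of that line as $z\in R_i$, the triple reduces to a triple $A\subset Y$ (namely $A=\{\alpha,\beta,\gamma\}$ with the points equal to $z+\alpha u_i,z+\beta u_i,z+\gamma u_i$, or $A=\{\infty_k,\alpha,\beta\}$). Since $(\mathcal{D}_i)_{i\in\Fq}$ is a large set, $A$ is a block of $\mathcal{D}_\delta$ for exactly one $\delta\in\Fq$; I would then take $w=z+\delta u_i$, so that $w-p_i(w)u_i=z\in R_i$ forces $p_i(w)=\delta$, and the hole-filling block $w-p_i(w)u_i+Au_i=z+Au_i$ reproduces $B$ inside $\mathcal{B}_w$.

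The step that needs care, and the main obstacle, is verifying that the reduced block $A$ of $\mathcal{D}_\delta$ is actually used somewhere in $\mathcal{B}_w$, regardless of which parallel class of $\mathcal{D}_\delta$ it happens to occupy. Here I would check that the indices $a+bt+1$ (for $0\le a<t$, $b=0,1,2$) range over $\{1,\ldots,3t\}=\{1,\ldots,(q-1)/2\}$, so that the classes of \eqref{eqn:classes} use every $Q_{p_i(w),1},\ldots,Q_{p_i(w),(q-1)/2}$, while $P_{w,*}$ of \eqref{eqn:lastclass} contributes $Q_{p_i(w),0}$. Hence every block of $\mathcal{D}_{p_i(w)}$ appears in the direction-$i$ hole-filling of $\mathcal{B}_w$, and the particular parallel-class index is immaterial for the covering argument. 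Once this is in place, the case analysis is complete and the counting identity upgrades "at least one" to "exactly one", establishing that $\{\mathcal{B}_w\}_{w\in W}$ is an LKTS($q^n+2$).
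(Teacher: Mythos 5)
Your proof is correct and takes essentially the same route as the paper's: both reduce the large-set property to showing that every triple of $X$ lies in \emph{at least one} $\mathcal{B}_w$ (a counting step the paper leaves implicit but you spell out), and both then run the same case analysis with the same choices of $w$ --- $w=x$ for $\{\infty_1,\infty_2,x\}$, $w=(x+y+z)/3$ for non-collinear triples, and $w$ pinned down by the unique design $\mathcal{D}_\delta$ of the base large set containing the reduced triple in the collinear and one-point-at-infinity cases. Your explicit verification that the indices $a+bt+1$ together with $0$ exhaust all parallel classes $Q_{p_i(w),j}$, $0\le j\le (q-1)/2$, so that every block of $\mathcal{D}_{p_i(w)}$ genuinely appears in $\mathcal{B}_w$, is a detail the paper glosses over and a worthwhile addition.
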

\begin{proof}
We only need to find, for each triple $\{x,y,z\}\subset X$, an element $w\in W$ such that $\{x,y,z\}\in \mathcal{B}_w$.

Like the proof of the previous lemma, we distinguish four cases.
\begin{enumerate}
  \item $\{y,z\}=\{\infty_1,\infty_2\}$. Take $w=x$.
  \item $\{x,y\}\subset W$, $z\in \{\infty_1,\infty_2\}$. Let $x=r+ax_i,y=r+bx_i$ where $a,b\in\mathbb{F}_q$ and $r\in R_i$, then there is a unique $d\in\mathbb{F}_q$ such that $\{a,b,z\}\in\mathcal{D}_d$. Take $w=r+dx_i$.
  \item $\{x,y,z\}\subset W$, and are collinear. Similarly, let $x=r+ax_i,y=r+bx_i,z=r+cx_i$ where $a,b,c\in\mathbb{F}_q$ and $r\in R_i$, there is a unique $d\in\mathbb{F}_q$ such that $\{a,b,c\}\in\mathcal{D}_d$. Take $w=r+dx_i$.
  \item $\{x,y,z\}\subset W$, and are not collinear. Take $w=(x+y+z)/3$.
\end{enumerate}
\end{proof}

\begin{rem}\label{rem:trans} Suppose the original large set $\{\mathcal{D}_i\}_{i\in\Fq}$ is invariant under translations in $\Fq$, i.e. $\mathcal{D}_i=\mathcal{D}_0+i$ and $Q_{i,j}=Q_{0,j}+i$ holds for all $i$ and $j$. For all $w\in W$ and $A\in Q_{p_i(w),j}$, we define $A_0=A-p_i(w)+p_i(0)$ so that $A_0\in Q_{p_i(0),j}$, and therefore $w-p_i(w)u_i+Au_i=w-p_i(0)+A_0u_i$. By (\ref{eqn:classes}) and (\ref{eqn:lastclass}), we can infer that $\mathcal{B}_w=\mathcal{B}_0+w$, thus the large set $\{\mathcal{B}_w\}_{w\in W}$ is also invariant under translations in $W$. We can also prove that different choice of $R_i$ gives the same design in this case.
\end{rem}

\section{Example: construction of an LTKS(171)}\label{sec:eg}

In this section, we present an example to illustrate our construction, applied to the case $q=13$ and $n=2$. The resulting large set is of order $13^2+2=171$, and is previously unknown. The basis of this construction is the LKTS(15) by Denniston \cite{MR0369086}, as described below:

\begin{eqnarray*}
Q_{0,0}&=&\{\{\infty_1,\infty_2,0\},\{1,4,5\},\{2,6,11\},\{3,7,10\},\{8,9,12\}\} \\
Q_{0,1}&=&\{\{\infty_1,1,6\},\{\infty_2,2,8\},\{0,10,12\},\{3,5,9\},\{4,7,11\}\} \\
Q_{0,2}&=&\{\{\infty_1,2,5\},\{\infty_2,4,9\},\{0,3,11\},\{1,7,12\},\{6,8,10\}\} \\
Q_{0,3}&=&\{\{\infty_1,3,12\},\{\infty_2,5,7\},\{0,4,6\},\{1,8,11\},\{2,9,10\}\} \\
Q_{0,4}&=&\{\{\infty_1,4,10\},\{\infty_2,11,12\},\{0,5,8\},\{1,2,3\},\{6,7,9\}\} \\
Q_{0,5}&=&\{\{\infty_1,7,8\},\{\infty_2,3,6\},\{0,1,9\},\{2,4,12\},\{5,10,11\}\} \\
Q_{0,6}&=&\{\{\infty_1,9,11\},\{\infty_2,1,10\},\{0,2,7\},\{3,4,8\},\{5,6,12\}\}
\end{eqnarray*}

and $Q_{i,j}=Q_{0,j}+i$ for $i\in\mathbb{F}_{13}$. We denote the point $(a,b)\in\mathbb{F}_{13}^2$ by $a_b$, and choose $g=2$, $\omega=g^4=3$, $V=\{1_0,0_1,1_1,2_1,3_1,4_1,5_1,6_1,7_1,8_1,9_1,10_1,11_1,12_1\}$, and the skew-symmetric bilinear form $f(a_b,c_d)=ad-bc$. By Remark \ref{rem:trans}, the 169 designs can be obtained as translations of a base design $\mathcal{B}_{0_0}$ on the set $X=\{\infty_1,\infty_2\}\cup\mathbb{F}_{13}^2$.

By (\ref{eqn:lastclass}), we obtain the first parallel class of $\mathcal{B}_{0_0}$:

\begin{align*}
P_{0_0,*}= \{&\{\infty_1, \infty_2, {0}_{0}\}, \{1_{0},4_{0},5_{0}\}, \{3_{0},7_{0},10_{0}\}, \{2_{0},6_{0},11_{0}\}, \{8_{0},9_{0},12_{0}\}, \\
&\{0_{1},0_{4},0_{5}\}, \{0_{3},0_{7},0_{10}\}, \{0_{2},0_{6},0_{11}\}, \{0_{8},0_{9},0_{12}\}, \\
&\{1_{1},4_{4},5_{5}\}, \{3_{3},7_{7},10_{10}\}, \{2_{2},6_{6},11_{11}\}, \{8_{8},9_{9},12_{12}\}, \\
&\{2_{1},8_{4},10_{5}\}, \{6_{3},1_{7},7_{10}\}, \{4_{2},12_{6},9_{11}\}, \{3_{8},5_{9},11_{12}\}, \\
&\{3_{1},12_{4},2_{5}\}, \{9_{3},8_{7},4_{10}\}, \{6_{2},5_{6},7_{11}\}, \{11_{8},1_{9},10_{12}\}, \\
&\{4_{1},3_{4},7_{5}\}, \{12_{3},2_{7},1_{10}\}, \{8_{2},11_{6},5_{11}\}, \{6_{8},10_{9},9_{12}\}, \\
&\{5_{1},7_{4},12_{5}\}, \{2_{3},9_{7},11_{10}\}, \{10_{2},4_{6},3_{11}\}, \{1_{8},6_{9},8_{12}\}, \\
&\{6_{1},11_{4},4_{5}\}, \{5_{3},3_{7},8_{10}\}, \{12_{2},10_{6},1_{11}\}, \{9_{8},2_{9},7_{12}\}, \\
&\{7_{1},2_{4},9_{5}\}, \{8_{3},10_{7},5_{10}\}, \{1_{2},3_{6},12_{11}\}, \{4_{8},11_{9},6_{12}\}, \\
&\{8_{1},6_{4},1_{5}\}, \{11_{3},4_{7},2_{10}\}, \{3_{2},9_{6},10_{11}\}, \{12_{8},7_{9},5_{12}\}, \\
&\{9_{1},10_{4},6_{5}\}, \{1_{3},11_{7},12_{10}\}, \{5_{2},2_{6},8_{11}\}, \{7_{8},3_{9},4_{12}\}, \\
&\{10_{1},1_{4},11_{5}\}, \{4_{3},5_{7},9_{10}\}, \{7_{2},8_{6},6_{11}\}, \{2_{8},12_{9},3_{12}\}, \\
&\{11_{1},5_{4},3_{5}\}, \{7_{3},12_{7},6_{10}\}, \{9_{2},1_{6},4_{11}\}, \{10_{8},8_{9},2_{12}\}, \\
&\{12_{1},9_{4},8_{5}\}, \{10_{3},6_{7},3_{10}\}, \{11_{2},7_{6},2_{11}\}, \{5_{8},4_{9},1_{12}\}\} \\
\end{align*}

We describe one of the other 84 classes, namely $P_{0_0,u_1=1_0,1}$. By (\ref{eqn:classes}), this class is given by
$P_{0_0,u_1,1}=P_{u_1,1}\bigcup\{-p_1(0_0)u_1+Au_1|A\in Q_{p_1(0_0),1}\}=P_{u_1,1}\bigcup\{Au_1|A\in Q_{0,1}\}$. The complete description of the class is given below:

\begin{align*}P_{0_0,u_{1},1}= \{&\{\infty_1,1_{0},6_{0}\}, \{\infty_2,2_{0},8_{0}\}, \{0_{0},10_{0},12_{0}\}, \{3_{0},5_{0},9_{0}\}, \{4_{0},7_{0},11_{0}\}, \\
&\{1_{1},3_{9},9_{3}\}, \{12_{12},10_{4},4_{10}\}, \{1_{2},3_{5},9_{6}\}, \{12_{11},10_{8},4_{7}\}, \\
&\{2_{1},12_{9},12_{3}\}, \{11_{12},1_{4},1_{10}\}, \{2_{2},12_{5},12_{6}\}, \{11_{11},1_{8},1_{7}\}, \\
&\{3_{1},8_{9},2_{3}\}, \{10_{12},5_{4},11_{10}\}, \{3_{2},8_{5},2_{6}\}, \{10_{11},5_{8},11_{7}\}, \\
&\{4_{1},4_{9},5_{3}\}, \{9_{12},9_{4},8_{10}\}, \{4_{2},4_{5},5_{6}\}, \{9_{11},9_{8},8_{7}\}, \\
&\{5_{1},0_{9},8_{3}\}, \{8_{12},0_{4},5_{10}\}, \{5_{2},0_{5},8_{6}\}, \{8_{11},0_{8},5_{7}\}, \\
&\{6_{1},9_{9},11_{3}\}, \{7_{12},4_{4},2_{10}\}, \{6_{2},9_{5},11_{6}\}, \{7_{11},4_{8},2_{7}\}, \\
&\{7_{1},5_{9},1_{3}\}, \{6_{12},8_{4},12_{10}\}, \{7_{2},5_{5},1_{6}\}, \{6_{11},8_{8},12_{7}\}, \\
&\{8_{1},1_{9},4_{3}\}, \{5_{12},12_{4},9_{10}\}, \{8_{2},1_{5},4_{6}\}, \{5_{11},12_{8},9_{7}\}, \\
&\{9_{1},10_{9},7_{3}\}, \{4_{12},3_{4},6_{10}\}, \{9_{2},10_{5},7_{6}\}, \{4_{11},3_{8},6_{7}\}, \\
&\{10_{1},6_{9},10_{3}\}, \{3_{12},7_{4},3_{10}\}, \{10_{2},6_{5},10_{6}\}, \{3_{11},7_{8},3_{7}\}, \\
&\{11_{1},2_{9},0_{3}\}, \{2_{12},11_{4},0_{10}\}, \{11_{2},2_{5},0_{6}\}, \{2_{11},11_{8},0_{7}\}, \\
&\{12_{1},11_{9},3_{3}\}, \{1_{12},2_{4},10_{10}\}, \{12_{2},11_{5},3_{6}\}, \{1_{11},2_{8},10_{7}\}, \\
&\{0_{1},7_{9},6_{3}\}, \{0_{12},6_{4},7_{10}\}, \{0_{2},7_{5},6_{6}\}, \{0_{11},6_{8},7_{7}\}\} \\
\end{align*}

For a complete list of the 85 parallel classes of $\mathcal{B}_{0_0}$, see the appendix.

\section*{References}
\bibliographystyle{siam}
\bibliography{designs}

\section*{Appendix}
Here we give a complete list of the 85 parallel classes of the base design $\mathcal{B}_{0_0}$ constructed in Section \ref{sec:eg}. The point $a_b$ is represented as \verb"ab", A,B,C denotes 10, 11 and 12 respectively, and $\infty_i$ are denoted by \verb"XX" and \verb"YY". 

\include{appendix1}

\end{document}